\documentclass[11pt]{article}
\usepackage{amsmath,amsthm,amsfonts,amssymb,latexsym}
\usepackage[utf8]{inputenc}
\usepackage[english]{babel}

\topmargin=0cm
\textheight 21,5cm \textwidth  15,5cm \oddsidemargin 0pt
\newtheorem{theorem}{Theorem}
\newtheorem{proposition}[theorem]{Proposition}

\newtheorem{corollary}{Corollary}[theorem]


\def\N{{\mathbb N}}
\newcommand{\ps}{\smallbreak}
\newcommand{\tq}{:}
\newcommand{\la}{\langle}
\newcommand{\ra}{\rangle}
\newcommand{\eps}{\varepsilon}

\newcommand{\xb}{\bar{x}}
\newcommand{\yb}{\bar{y}}
\newcommand{\xs}{x^*}
\newcommand{\ys}{y^*}

\newcommand{\tom}{\rightrightarrows}
\newcommand{\conv}{{\rm conv} \kern.15em}
\newcommand{\cconv}{{\overline{\rm conv}} \kern.15em}

\begin{document}
\thispagestyle{empty}
\begin{center}
{\large\bf
Convex KKM maps, monotone operators and Minty variational inequalities}
\end{center}

\begin{center}
  {\small\begin{tabular}{c}
  Marc Lassonde\\
   Universit\'e des Antilles, 97159 Pointe \`a Pitre, France\\
  E-mail: marc.lassonde@univ-ag.fr
  \end{tabular}}
\end{center}

\medbreak\noindent
\textbf{Abstract.}
It is known that for convex sets, the KKM condition is equivalent
to the finite intersection property.
We use this equivalence to obtain
a characterisation of monotone operators in terms of convex KKM maps and
in terms of the existence of solutions to Minty variational inequalities.
The latter result provides a converse to the seminal theorem of Minty.
\medbreak\noindent
\textbf{Keywords:}
  KKM Principle, Finite Intersection Property,
  convex set, monotone operator,
  Minty variational inequality.
  
\medbreak\noindent
\textbf{2010 Mathematics  Subject Classification:}
  Primary 47H05, 47J20; Secondary 49J40

\section{Introduction}\label{introduction}
More than twenty years ago, in two joint papers with A. Granas
\cite{GL91,GL95} 
the authors presented a new geometric approach in convex analysis.
This approach was based on the finite intersection property
of KKM-maps with closed \textit{convex} values.
It was shown that this special case of the KKM Principle admits an elementary direct proof
and yet, as the general KKM Principle,
it has numerous applications in different areas of mathematics.
\ps
In this note, we further enlightened the above method by showing that
it is intimately connected with the Minty-Browder monotonicity method.
More precisely, we show that an operator $T:E\tom E^*$ is monotone if and only if
for every $x^*\in E^*$, the natural convex-valued map
$\Gamma_{T-x^*}:E\tom E$ associated to it, namely
$$
\Gamma_{T-x^*}(y):=\{ x\in E\tq \la y^*-x^*, y-x\ra\ge 0,~ \forall y^*\in T(y)\},
$$
is KKM. Then we show that the Minty Variational Inequality
associated to an operator $T:E\tom E^*$ has a solution
for every nonempty compact convex $K\subset E$ and every $x^*\in E^*$,
that is 
$$\exists \xb\in K:\ \forall (y,y^*)\in T\cap(K\times E^*),\ \la y^*-x^*,y-\xb\ra\ge 0,
$$
if and only if $T$ is monotone.
This last result provides a converse to Minty's theorem \cite{Min62,Min67}.
\ps
We should mention that our interest in writing up this material was stimulated by a
paper by John \cite{Joh01} where it is shown that the Minty Variational Inequality
associated to $T:E\tom E^*$ has a solution for every nonempty compact convex $K\subset E$
and $x^*=0$
if and only if $T$ is properly quasimonotone.
\section{Convex KKM condition vs. Finite Intersection Property}\label{KKM-fip}
Set-valued maps $T : X \tom Y$ between sets $X$ and $Y$ are identified with their
graphs $T \subset X \times Y$, so $y\in T(x)$ is equally written as
$(x, y) \in T$. The \textit{values} of
$T : X \tom Y$ are the subsets $T(x)\subset Y$ for $x\in X$ and
the \textit{domain} of $T$ is the set $D(T):=\{ x\in X\tq T(x)\ne\emptyset\}$.
\ps 
In the sequel, $E$ denotes a real locally convex topological vector space,
$E^*$ its dual space and $\la ., .\ra$ the duality mapping.
For $A \subset E$, we use the abbreviation
$[A] = \cconv A$ for the closed convex hull of $A$.
For each positive integer $n$, we set $[n] =\{\,i\in \N \tq 1\le i \le n \,\}$.
A subset of $E$ is said to be \textit{finitely closed} if
its intersection with any finite-dimensional subspace of $E$ is closed (for
the Euclidean topology).
\ps
A set-valued map $\Gamma:E\tom E$ is called a {\it KKM-map} provided it satisfies
\begin{center}
(KKM) ~~ For every finite subset
$A\subset D(\Gamma),\ [A]\subset\bigcup\{\,\Gamma(x)\tq x\in A\,\}$.
\hspace*{\fill}{}
\end{center}
\ps
The KKM Principle asserts that KKM-maps with closed values has the finite intersection property. This intersection principle
is known to be equivalent to Sperner's combinatorial lemma
and to Brouwer's fixed point theorem (see e.g.\ Granas-Dugundji's monograph
\cite{GD03}).

When the KKM-maps have \textit{convex} values, the KKM Principle
can be given an elementary proof
(see e.g. Valentine \cite[p. 76]{Val76} or Granas-Lassonde \cite{GL91,GL95}).
Moreover in this convex case, as was observed by John \cite{Joh01},
the finite intersection property is actually equivalent to the KKM condition.
For the sake of completeness,
we provide a proof of this fundamental equivalence result.

\begin{theorem}\label{Th-KKM-fip}
Let $E$ be a vector space and let $\Gamma:E\tom E$ be a set-valued map
with finitely closed and convex values.
The following are equivalent:
\ps
{\rm (KKM)} ~~For every finite subset $A\subset D(\Gamma),\ [A]
\subset\bigcup\{\,\Gamma(x)\tq x\in A\,\}$;
\ps
{\rm ~~(FIP)} ~~For every finite subset $A\subset D(\Gamma),\ [A]
\cap\bigcap\{\,\Gamma(x)\tq x\in A\,\}\ne \emptyset$.
\end{theorem}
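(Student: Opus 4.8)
The plan is to prove the two implications separately, after reducing everything to finite dimensions. Fix a finite $A=\{a_1,\dots,a_n\}\subset D(\Gamma)$ and let $L$ be the (finite-dimensional) linear span of $A$. Since $L$ is finite-dimensional, $\conv A$ is compact, hence closed in $L$, so $[A]=\cconv A=\conv A$ is a compact polytope; moreover finite closedness of the values makes each $\Gamma(a_i)\cap L$ closed in $L$, so that $C_i:=\Gamma(a_i)\cap[A]$ is a compact convex subset of $[A]$. Both (KKM) and (FIP), read on $A$ and on its subsets, thus become statements about the finitely many compact convex sets $C_i$. Applying either condition to the singletons $\{a_i\}$ gives $a_i\in\Gamma(a_i)$, i.e.\ $a_i\in C_i$, which serves as the base of the inductions below. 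Note that only the hard implication will use closedness; the easy one needs convexity alone.

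For (FIP)$\Rightarrow$(KKM) I would argue by induction on $n=|A|$, working inside $[A]$. Let $z\in[A]$; I must produce $a\in A$ with $z\in\Gamma(a)$. If $z$ lies in a proper face of the polytope $[A]$, that face is $[A']$ for a proper subset $A'\subsetneq A$, and the induction hypothesis applied to $A'$ gives the conclusion. Otherwise $z$ lies in the relative interior of $[A]$. Here I invoke (FIP) for $A$ itself to obtain a point $c\in[A]\cap\bigcap_i\Gamma(a_i)$; if $c=z$ we are done, and if not I follow the ray issuing from $c$ through $z$ until it meets the relative boundary of $[A]$ at a point $w$, so that $z$ lies on the segment $[c,w]$. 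Since $w$ belongs to some proper face $[A']$, the induction hypothesis yields $w\in\Gamma(a)$ for some $a\in A'\subset A$; as also $c\in\Gamma(a)$ and $\Gamma(a)$ is convex, the whole segment $[c,w]$, and in particular $z$, lies in $\Gamma(a)$. This settles the implication using convexity of the values only.

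The implication (KKM)$\Rightarrow$(FIP) is the classical convex KKM lemma, and I expect it to be the main obstacle. I would first reduce to the standard simplex: the affine map $\phi:\Delta^{n-1}\to[A]$ sending the $i$-th vertex $e_i$ to $a_i$ is onto, and the pullbacks $\tilde C_i:=\phi^{-1}(C_i)$ are compact convex sets with $e_i\in\tilde C_i$ that again satisfy (KKM); since $\phi(\bigcap_i\tilde C_i)\subset\bigcap_i C_i$, it suffices to find a common point of the $\tilde C_i$ on $\Delta^{n-1}$. One is thus reduced to the assertion that closed convex sets $C_1,\dots,C_n$ on a simplex, with $e_i\in C_i$ and every face covered by the corresponding subfamily, have a common point. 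Here convexity must do what Sperner's lemma does in the general case: I would consider the convex continuous function $f(x)=\max_i d(x,C_i)$, which vanishes exactly on $\bigcap_i C_i$, and examine a minimizer $x_0$ over the compact simplex. Assuming $f(x_0)>0$, the sets $C_i$ active at $x_0$ are separated from $x_0$ by the hyperplanes through their nearest points, and the first-order optimality of the convex function $f$ (absence of a feasible descent direction) constrains these separating directions; combining this with the (KKM) covering of the minimal face of $\Delta^{n-1}$ carrying $x_0$ should force a contradiction, whence $f(x_0)=0$. The delicate point, and the reason an honest argument is longer than it looks, is turning the optimality condition on the boundary faces into a contradiction with the covering: this is precisely the elementary separation argument for the convex KKM lemma in Valentine \cite{Val76} and Granas--Lassonde \cite{GL91,GL95}, which replaces the fixed-point/combinatorial input by the finite-dimensional Hahn--Banach separation theorem.
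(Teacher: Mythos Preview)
Your (FIP)$\Rightarrow$(KKM) argument is correct and is the same ray-to-the-boundary induction the paper gives (the paper does it in one stroke without splitting into the face/interior cases, but the content is identical).

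For (KKM)$\Rightarrow$(FIP) you have located the right auxiliary function $f(x)=\max_i d(x,C_i)$, but the proposal as written has a genuine gap: you minimise $f$ over the whole simplex and then hope that optimality conditions plus separation, combined with the KKM covering of the minimal face carrying $x_0$, ``should force a contradiction''. Concretely, from $x_0\in\tilde C_{j_0}$ you only get that $j_0$ is not active; you have no point lying in \emph{all} active $\tilde C_i$ toward which to move, and the relation $0\in\conv\{u_i:i\in I\}+N_\Delta(x_0)$ together with $e_i\in\tilde C_i$ does not by itself yield a contradiction. Your deferral to ``the elementary separation argument \dots\ in Valentine and Granas--Lassonde'' is also off the mark: the argument in those references, reproduced in the paper, is not a separation argument but an \emph{induction}, and that induction is precisely the missing idea.

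The paper's trick is to argue by induction on $n$: for each $j\in[n]$ the induction hypothesis applied to $A\setminus\{x_j\}$ produces a point $y_j\in\bigcap_{i\ne j}G_i$. One then minimises $f$ over the small simplex $K=[y_1,\dots,y_n]$ rather than over $[A]$. At the minimiser $\bar y$, the full KKM covering of $[A]$ places $\bar y$ in some $G_n$ (after relabelling), so $d(\bar y,G_n)=0$; and now the special feature of the $y_j$'s kicks in: $y_n\in G_i$ for every $i\ne n$, hence $d(y_n,G_i)=0$, and convexity of $d(\cdot,G_i)$ gives $d(y_t,G_i)\le t\,d(\bar y,G_i)<f(\bar y)$ along $y_t=t\bar y+(1-t)y_n$, while continuity keeps $d(y_t,G_n)<f(\bar y)$ for $t$ close to $1$. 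This elementary descent step replaces your unspecified optimality/separation argument and needs nothing beyond convexity of the distance functions. Your reduction to the standard simplex is harmless but unnecessary once the induction is in place.
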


\begin{proof}
(KKM) $\Rightarrow$ (FIP) (see \cite{GL91,GL95}).
The proof is by induction on the cardinality of the finite sets $A$.
For any set consisting of a single element,
both statements (KKM) and (FIP) are the same.
Assuming that (FIP) holds for any set containing $(n-1)$ elements, we consider
a subset $A = \{x_1,x_2,\ldots,x_n\}\subset D(\Gamma)$ with $n$ elements.
Let $G_i=\Gamma(x_i)\cap [A]$.
We have to show that the family $\{G_i\tq i\in [n]\}$ has a nonempty intersection.

Observe that the sets $G_i$ are contained in the finite dimensional vector space
spanned by $A$. We may therefore assume
that the underlying space is finite dimensional, the sets $G_i$ are closed
and the topology is described by a norm $\|.\|$.
For a point $y$ and a set $G$, we let $d(y,G):=\inf \{\|y-z\|\tq z\in G\}$. 

For each $j\in [n]$, by induction hypothesis we may pick up a point
$y_j\in \bigcap\{G_i \tq i\ne j\}$.
Let $K = [y_1, y_2, \ldots, y_n]$.
The continuous function $f:y\mapsto \max\{d(y,G_i) \tq i\in [n]\}$
attains its minimum on the compact set $K$ at a point $\yb$.
Since the sets $G_i$ are closed, to prove the result it suffices to show that
$f(\yb)=0$. Suppose to the contrary that $f(\yb)=\eps>0$. 
\ps
It follows from (KKM) that $\bigcup\{G_i\tq i\in [n]\}=[A]$ is a convex set containing the points $y_1,y_2,\ldots,y_n$, so it also contains the point $\yb\in K$.
Without loss of generality, we may assume that  
$\yb$ belongs to $G_n$, so that $d(\yb,G_n)=0$. 
The function $y\mapsto d(y,G_n)$ being continuous, there is a point
close to $\yb$ of the form $y_t=t\yb+(1-t)y_n\in K$ with $0\le t< 1$
such that $d(y_t,G_n)<\eps$.
On the other hand, $y_n\in G_i$ for all $i\in [n-1]$,
hence $d(y_n,G_i)=0$ for all $i\in [n-1]$.
From the convexity of the functions $y\mapsto d(y,G_i)$
we derive that for all $i\in [n-1]$, we have
$d(y_t,G_i)\le t d(\yb,G_i)\le tf(\yb)<f(\yb)$.
Thus, the point $y_t\in K$ would verify $d(y_t,G_i)<f(\yb)$ for all $i\in [n]$,
that is, $f(y_t)<f(\yb)=\min\{f(y)\tq y\in K\}$, which is a absurd.
\medbreak
(FIP) $\Rightarrow$ (KKM) (see \cite{Joh01}).
The proof is also by induction on the cardinality of the finite sets $A$.
As already noticed, for any set consisting of a single element,
both statements (KKM) and (FIP) are the same.
Assume that (KKM) holds for all sets $A$ with $n-1$ elements
and consider a set $A = \{x_1,x_2,\ldots,x_n\}\subset D(\Gamma)$ with $n$ elements.
By (FIP), choose $\xb$ in $[A]\cap \bigcap\{\,\Gamma(x_i)\tq i\in [n]\,\}$.
Let $x\in [A]$ with $x\ne \xb$.
Consider $z$ on the boundary of $[A]$
such that $x\in [z,\xb]$. Since $z\in [A\setminus\{x_i\}]$
for some $i$, from the induction hypothesis we derive that
$z\in \Gamma(x_{i_0})$ for some $i_0\in [n]\setminus\{i\}$.
Now, since $\xb\in \Gamma(x_{i_0})$ and $\Gamma(x_{i_0})$ is convex,
we infer that $x\in [z,\xb]\subset \Gamma(x_{i_0})$.
Therefore, every $x\in [A]$ belongs to $\bigcup\{\,\Gamma(x_i)\tq i\in [n]\,\}$.
\end{proof}

\section{KKM maps vs. monotone operators}\label{KKM-monotone}

A subset $T\subset E\times E^*$, or set-valued $T:E\tom E^*$, is said to be
{\it monotone\/} provided
\begin{equation*}\label{monotone}
\forall (x,\xs)\in T,\ \forall (y,\ys)\in T,\ \la \ys-\xs,y-x\ra\ge 0,
\end{equation*}
and {\it  quasimonotone\/} provided
\begin{equation*}\label{quasimonotone}
\forall (x,\xs)\in T,\ \forall (y,\ys)\in T,\ \max\{\la \xs,x-y\ra, \la \ys,y-x\ra\}\ge 0.
\end{equation*}
Given $T:E\tom E^*$, we define $\Gamma_T:E\tom E$ by
$$
\Gamma_T(y)=\{ x\in E\tq \la y^*, y-x\ra\ge 0,~ \forall y^*\in T(y)\}.
$$
Observe that the sets $\Gamma_T(y)$ are convex and finitely closed and
$\Gamma_T(y)=E$ when $y\not\in D(T)$.
\ps
The relationship between these notions is described in the following proposition:

\begin{proposition}
Let $E$ be a real locally convex topological vector space
with topological dual $E^*$.
Let $T:E\tom E^*$. Then:
\ps
{\rm (a)} $T$ monotone $\Rightarrow$ $\Gamma_T$ KKM
$\Rightarrow$ $T$ quasimonotone.
\ps
{\rm (b)} $T$ is monotone $\Leftrightarrow$ $\forall x^*\in E^*$,
the operator $x\mapsto T(x)-x^*$ is quasimonotone.
\end{proposition}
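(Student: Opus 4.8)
The plan is to establish the two implications of (a) directly and then reduce (b) to them. For the first implication of (a), I would begin by observing that $y\in\Gamma_T(y)$ for every $y$ (since $\la y^*,y-y\ra=0$), so $D(\Gamma_T)=E$ and the (KKM) condition must be checked on an arbitrary finite set $A=\{y_1,\dots,y_n\}\subset E$. Fix $x=\sum_i\lambda_i y_i\in[A]$ with $\lambda_i\ge 0$, $\sum_i\lambda_i=1$ (the convex hull of a finite set is compact, hence closed, so $[A]=\conv A$). Arguing by contradiction, suppose $x\notin\Gamma_T(y_i)$ for every $i$; since $\Gamma_T(y_i)=E$ whenever $y_i\notin D(T)$, this forces every $y_i\in D(T)$, and for each $i$ we may pick $y_i^*\in T(y_i)$ with $\la y_i^*,y_i-x\ra<0$. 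Forming the convex combination gives $\sum_i\lambda_i\la y_i^*,y_i-x\ra<0$. The key step is to write $y_i-x=\sum_j\lambda_j(y_i-y_j)$ and symmetrise the resulting double sum: pairing the $(i,j)$ and $(j,i)$ terms produces $\lambda_i\lambda_j\la y_i^*-y_j^*,y_i-y_j\ra\ge 0$ by monotonicity, so the whole sum is $\ge 0$, a contradiction. Hence $x$ lies in some $\Gamma_T(y_i)$ and $\Gamma_T$ is KKM.

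For the second implication of (a), I would apply the KKM property to the two-point set $A=\{x,y\}$, where $(x,x^*),(y,y^*)\in T$ are arbitrary. The midpoint $m=\tfrac12(x+y)\in[A]$ must lie in $\Gamma_T(x)$ or in $\Gamma_T(y)$. In the first case $\la x^*,x-m\ra\ge 0$, i.e.\ $\la x^*,x-y\ra\ge 0$; in the second $\la y^*,y-m\ra\ge 0$, i.e.\ $\la y^*,y-x\ra\ge 0$. Either way $\max\{\la x^*,x-y\ra,\la y^*,y-x\ra\}\ge 0$, which is exactly quasimonotonicity.

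For (b), the forward direction is immediate: translating the values of a monotone $T$ by a fixed $x^*\in E^*$ leaves $\la(y^*-x^*)-(u^*-x^*),\,y-u\ra=\la y^*-u^*,y-u\ra$ unchanged, so $x\mapsto T(x)-x^*$ is monotone, hence quasimonotone by (a). The substance lies in the converse, which I would prove by contraposition, and this is the step I expect to be the main obstacle. If $T$ is not monotone, pick $(x,u^*),(y,v^*)\in T$ with $\la v^*-u^*,y-x\ra<0$, and set $d:=y-x\ne 0$, $\alpha:=\la v^*,d\ra<\la u^*,d\ra=:\beta$; choose any scalar $c$ with $\alpha<c<\beta$. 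The delicate point is realising this threshold by a functional: since $E$ is locally convex, $E^*$ separates points, so there is $\varphi\in E^*$ with $\la\varphi,d\ra\ne 0$, and $x^*:=\tfrac{c}{\la\varphi,d\ra}\,\varphi$ satisfies $\la x^*,d\ra=c$. A short computation then yields $\la u^*-x^*,x-y\ra=c-\beta<0$ and $\la v^*-x^*,y-x\ra=\alpha-c<0$ simultaneously, so $x\mapsto T(x)-x^*$ fails to be quasimonotone. This contradicts the hypothesis, proving $T$ monotone.
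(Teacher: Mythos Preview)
Your proof is correct and follows essentially the same route as the paper: the symmetrisation identity $\sum_{i,j}\lambda_i\lambda_j\la y_i^*-y_j^*,y_i-y_j\ra\ge 0$ for the first implication in (a), the midpoint argument for the second, and a separating functional on $d=y-x$ for the nontrivial direction of (b). The only cosmetic differences are that the paper packages (a1) through the auxiliary function $g(x,y)=\sup_{y^*\in T(y)}\la y^*,x-y\ra$ and proves the converse in (b) directly via an $\eps\to 0$ argument rather than by contraposition, but the underlying ideas coincide.
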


\begin{proof}
These facts are well-known; we give the proof for the sake of completeness.
\ps
(a1) (see \cite{GL91})
We show: \textit{$T$ monotone $\Rightarrow$ $\Gamma_T$ KKM}.
Let $\{y_1, \ldots, y_n\} \subset D(T)$. Consider
$x_0 = \sum_{i=1}^n\lambda_i y_i$, where
$\lambda_i\ge 0$ for $i\in [n]$ and $\sum_{i=1}^n\lambda_i=1$.
For $(x,y) \in E\times D(T)$, set
$$g(x,y) := \sup_{y^*\in T(y)} \la y^*,x-y\ra.$$
By monotonicity of $T$, we have
$$g(y_i,y_j) + g(y_j,y_i) \leq 0,\quad\forall i, j \in [n],$$
hence
$$
\sum_{i=1}^n\lambda_i g(y_i,y_j) + \sum_{i=1}^n\lambda_i g(y_j,y_i)
\leq 0,\quad\forall j \in [n],$$
and by convexity of $x \mapsto g(x,y)$,
$$g(x_0,y_j)+\sum_{i=1}^n\lambda_i g(y_j,y_i) \leq 0,\quad\forall j \in [n].$$
Applying the same operations on these inequalities 
(multiplying by $\lambda_j$, summing over $j$, using the convexity of
$x \mapsto g(x,y)$), we arrive at
$$\sum_{j=1}^n\lambda_j g(x_0,y_j) + \sum_{i=1}^n\lambda_i g(x_0,y_i) \leq 0.$$
Thus, $g(x_0,y_i) \leq 0$ for at least one $i\in [n]$.
This means that
$x_0 \in \bigcup\{\Gamma_T(y_i)\tq i \in [n]\}$ and proves that $\Gamma_T$ is KKM.
\ps
(a2) We show: \textit{$\Gamma_T$ KKM $\Rightarrow$ $T$ quasimonotone}.
Let $(x,\xs)$ and $(y,\ys)$ in $T$. Consider $z=(x+y)/2\in [x,y]$.
Since $\Gamma_T$ is KKM, we must have either $z\in \Gamma_T(x)$
or $z\in \Gamma_T(y)$. The first case implies $\la \xs,x-z\ra\ge 0$,
hence $\la \xs,x-y\ra\ge 0$,
the second one implies $\la \ys,y-z\ra\ge 0$,
hence $\la \ys,y-x\ra\ge 0$; therefore
always  $\max\{\la \xs,x-y\ra, \la \ys,y-x\ra\}\ge 0$.
\ps
(b) (see \cite{ACL94})
If $T$ is monotone, then for every $x^*\in E^*$,
the operator $x\mapsto T(x)-x^*$ is clearly monotone,
hence quasimonotone. To prove the converse,
let $x,y$ in $D(T)$ with $x\neq y$, let $x^*\in T(x)$,
$y^*\in T(y)$, and let $\eps>0$. Choose $z^*\in E^*$ such that
 $$\langle x^*-z^*,y-x\rangle=\eps>0.$$
Since $x\mapsto T(x)-z^*$ is assumed to be quasimonotone,
the above inequality implies that
$\langle y^*-z^*,y-x\rangle\ge 0$,
or equivalently
$\langle y^*,y-x\rangle
 \ge \langle z^*,y-x\rangle= \langle x^*,y-x\rangle -\eps$,
that is, $\langle y^*-x^*,y-x\rangle \ge -\eps$.
Since $\eps$ can be arbitrarily small, we conclude that $T$ is monotone.
\end{proof}

As a consequence of the previous proposition, we readily obtain
a characterization of monotone operators in terms of KKM maps:

\begin{theorem}\label{Th-KKM-monotone}
Let $E$ be a real locally convex topological vector space
with topological dual $E^*$.
Let $T:E\tom E^*$. The following are equivalent:
\ps
{\rm (1)} $T$ is monotone, which amounts to:
for every finite subset $\{(x_i,x^*_i)\tq i\in [m]\}\subset T$,
\begin{equation*}
\forall i,j\in [m],~\la x^*_i - x^*_j, x_i - x_j\ra \ge 0;
\end{equation*}
\ps
{\rm (2)} For every $x^*\in E^*$, the map $\Gamma_{T-x^*}$ is KKM, that is:
for every $x^*\in E^*$ and for every finite subset $\{x_i\tq i\in [m]\}\subset D(T)$,
\begin{equation*}\label{Gamma-T-KKM}
\forall \xb\in [x_1,\ldots,x_m],\ \exists i\in [m] :
\forall  x^*_i \in T(x_i),~\la x^*_i - x^*, x_i - \xb\ra \ge 0.
\end{equation*}
\end{theorem}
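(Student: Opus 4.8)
The plan is to deduce this theorem directly from the previous Proposition, since both implications come from applying its statements to a suitably translated operator. The first step is to dispose of the two reformulations. The condition attached to (1) is simply monotonicity tested on finite subsets of $T$, and this is equivalent to monotonicity itself because any violating pair $(x,\xs),(y,\ys)$ already lives in a two-element subset of $T$. The condition attached to (2) is the verbatim unpacking of the inclusion $[x_1,\ldots,x_m]\subset\bigcup\{\Gamma_{T-x^*}(x_i)\tq i\in[m]\}$: indeed $\xb\in\Gamma_{T-x^*}(x_i)$ means precisely that $\la x_i^*-x^*,x_i-\xb\ra\ge 0$ for all $x_i^*\in T(x_i)$.

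The key observation is that $\Gamma_{T-x^*}$ is nothing but $\Gamma_S$ for the translated operator $S:=T-x^*$, where $S(x)=T(x)-x^*$, and that translating by a fixed $x^*\in E^*$ preserves both monotonicity and quasimonotonicity, since for any $u^*,v^*\in E^*$ one has $\la(u^*-x^*)-(v^*-x^*),y-x\ra=\la u^*-v^*,y-x\ra$. Thus everything reduces to applying the Proposition to $S=T-x^*$. For (1) $\Rightarrow$ (2): if $T$ is monotone then $T-x^*$ is monotone for every $x^*\in E^*$, and the first implication of Proposition (a), applied to $S=T-x^*$, yields that $\Gamma_{T-x^*}$ is KKM for each $x^*$. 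For (2) $\Rightarrow$ (1): fixing $x^*\in E^*$ and applying the second implication of Proposition (a) to $S=T-x^*$ shows that $\Gamma_{T-x^*}$ KKM forces $T-x^*$ to be quasimonotone; since this holds for every $x^*$, Proposition (b) delivers the monotonicity of $T$.

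There is no serious obstacle here, as the result is a genuine corollary of the Proposition. The only point demanding a little care is the bookkeeping between the domain $D(\Gamma_{T-x^*})$ appearing in the definition of a KKM-map and the set $D(T)$ used in the reformulation of (2). Any point $x_i\notin D(T)$ satisfies $\Gamma_{T-x^*}(x_i)=E$, so such points trivially absorb $[A]$ and contribute nothing to the KKM condition; consequently testing KKM on finite subsets of $D(T)$, as written in (2), is equivalent to testing it on finite subsets of $D(\Gamma_{T-x^*})$, as in the defining property of a KKM-map. This is exactly what legitimises the stated reformulation, and it is the one thing I would verify explicitly before invoking the Proposition.
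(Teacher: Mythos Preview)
Your proof is correct and follows exactly the route the paper intends: the theorem is stated as an immediate consequence of the preceding Proposition, and you have simply spelled out the two-step deduction (Proposition (a) applied to $T-x^*$ for each direction, with Proposition (b) closing the loop) together with the domain bookkeeping. There is nothing to add or correct.
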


\section{Monotone operators vs.\ Minty Variational Inequalitiy}\label{monotone-Minty}

Let $T\subset E\times E^*$ and $x^*\in E^*$.
The {\it Minty Variational Inequality} governed by $T$ and $x^*$ 
is the problem of finding a solution $\xb\in [D(T)]$
to the following system of linear equalities:
\begin{center}
MVI\,($T$, $x^*$)~~~
$\forall (y,y^*)\in T,\ \la y^*-x^*,y-\xb\ra\ge 0.$\hspace*{\fill}{ }
\end{center}
Minty's seminal theorem \cite{Min62,Min67} asserts that every finite
or compact subsystem of MVI\,($T$, $x^*$) has a solution whenever $T$ is monotone.
The next result provides a converse to Minty's theorem.

\begin{theorem}\label{converse-Minty}
Let $E$ be a real locally convex topological vector space
with topological dual $E^*$.
Let $T:E\tom E^*$. The following are equivalent:
\ps
{\rm (1)} $T$ is monotone, that is:
for every finite subset $\{(x_i,x^*_i)\tq i\in [m]\}\subset T$,
\begin{equation*}
\forall i,j\in [m],~\la x^*_i - x^*_j, x_i - x_j\ra \ge 0;
\end{equation*}
\ps
{\rm (2)} For every $x^*\in E^*$
and every finite subset $\{(x_i,x^*_i)\tq i\in [m]\}\subset T$,
\begin{equation*}
\exists \xb\in [x_1,\ldots,x_m]: \forall i\in [m],
\ \la x^*_i - x^*, x_i - \xb\ra \ge 0.
\end{equation*}
\ps
{\rm (3)} For every $x^*\in E^*$
and every nonempty compact convex subset $K\subset E$,
\begin{equation*}
\exists \xb \in K:
\forall (y,y^*)\in T\cap (K\times E^*),\ \la y^*-x^*,y-\xb\ra\ge 0.
\end{equation*}
\end{theorem}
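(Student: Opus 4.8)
The plan is to prove the cycle (1)$\Rightarrow$(3)$\Rightarrow$(2)$\Rightarrow$(1). The two forward implications are soft consequences of the material already developed, while the closing implication (2)$\Rightarrow$(1) is the genuine converse to Minty and is where the real work lies.

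First I would prove (1)$\Rightarrow$(3). Assuming $T$ monotone, Theorem \ref{Th-KKM-monotone} gives that $\Gamma_{T-x^*}$ is KKM for every $x^*\in E^*$. In a locally convex space each value $\Gamma_{T-x^*}(y)$ is an intersection of the closed half-spaces $\{x:\la y^*-x^*,y-x\ra\ge 0\}$, hence closed and convex, so Theorem \ref{Th-KKM-fip} upgrades (KKM) to (FIP). Now fix a nonempty compact convex $K$ and consider the family of closed subsets $\{\Gamma_{T-x^*}(y)\cap K : y\in K\cap D(T)\}$ of $K$. For any finite choice $y_1,\ldots,y_m\in K\cap D(T)$, (FIP) produces a point in $[y_1,\ldots,y_m]\cap\bigcap_i\Gamma_{T-x^*}(y_i)$; as $K$ is convex, closed and contains the $y_i$, this point lies in $K$, so the finite subfamily has nonempty intersection. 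Compactness of $K$ then forces the whole intersection to be nonempty, and any $\xb$ in it satisfies $\la y^*-x^*,y-\xb\ra\ge 0$ for every $(y,y^*)\in T\cap(K\times E^*)$, which is precisely (3).

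Next, (3)$\Rightarrow$(2) is immediate: given finitely many pairs $(x_i,x^*_i)\in T$, apply (3) to the compact convex set $K=[x_1,\ldots,x_m]$ (compact as a continuous image of a simplex); the resulting $\xb\in K$ satisfies the inequality for all $(y,y^*)\in T\cap(K\times E^*)$, in particular for each pair $(x_i,x^*_i)$ since $x_i\in K$, which is (2).

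The crux is (2)$\Rightarrow$(1). Arguing by contradiction, suppose $\la y^*-x^*,y-x\ra<0$ for some $(x,x^*),(y,y^*)\in T$ (which forces $x\ne y$). The idea is to pick the perturbation in (2) so as to pin $\xb$ to both endpoints of $[x,y]$ at once. I would take $z^*=\tfrac12(x^*+y^*)$, note that $\la x^*-z^*,y-x\ra=\tfrac12\la x^*-y^*,y-x\ra>0$ while $\la y^*-z^*,y-x\ra=\tfrac12\la y^*-x^*,y-x\ra<0$, and apply (2) with perturbation $z^*$ to the two pairs. Writing the returned point as $\xb=tx+(1-t)y$ with $t\in[0,1]$, the inequality $\la x^*-z^*,x-\xb\ra\ge 0$ reads $-(1-t)\la x^*-z^*,y-x\ra\ge 0$ and forces $t\ge 1$, whereas $\la y^*-z^*,y-\xb\ra\ge 0$ reads $t\la y^*-z^*,y-x\ra\ge 0$ and forces $t\le 0$ --- impossible. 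Hence no violating pair exists and $T$ is monotone. The only point requiring care elsewhere is the closedness of the values $\Gamma_{T-x^*}(y)$ in (1)$\Rightarrow$(3), which is exactly where local convexity (continuity of the functionals $y^*-x^*$) enters and makes the topological compactness argument legitimate.
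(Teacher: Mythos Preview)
Your proof is correct. The cycle $(1)\Rightarrow(3)\Rightarrow(2)$ matches the paper's argument almost verbatim (the paper separates it as $(1)\Leftrightarrow(2)$, $(2)\Rightarrow(3)$, $(3)\Rightarrow(2)$, but the substance --- Theorem~\ref{Th-KKM-monotone} for KKM, Theorem~\ref{Th-KKM-fip} for FIP, then a compactness argument --- is identical).

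The genuine difference is in $(2)\Rightarrow(1)$. The paper does \emph{not} treat this as the hard step: it simply runs the equivalence $(1)\Leftrightarrow(2)$ backwards through Theorems~\ref{Th-KKM-monotone} and~\ref{Th-KKM-fip}, i.e.\ (2) says $\Gamma_{T'-x^*}$ has (FIP) for every finite $T'\subset T$ and every $x^*$, hence (by the FIP$\Rightarrow$KKM direction of Theorem~\ref{Th-KKM-fip}) is KKM, hence (by Theorem~\ref{Th-KKM-monotone}, which in turn rests on the quasimonotone characterisation of Proposition~2(b)) $T'$ is monotone. Your route is more self-contained: the choice $z^*=\tfrac12(x^*+y^*)$ applied to just the two offending pairs forces the Minty point $\bar x$ to satisfy $t\ge 1$ and $t\le 0$ simultaneously. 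This bypasses both the inductive FIP$\Rightarrow$KKM argument and Proposition~2, and in effect proves directly the two-point Corollary that the paper only derives afterwards. What the paper's route buys is conceptual unity (everything flows from the KKM/FIP equivalence); what yours buys is an elementary, dependency-free closing step.
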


\begin{proof}
(1) $\Leftrightarrow$ (2). The statement
(1) can be rephrased as "any finite subset $T'\subset T$
is monotone", which
is equivalent by Theorem \ref{Th-KKM-monotone} to the statement:
"for any $x^*\in E^*$ and any finite subset $T'\subset T$,
the map $\Gamma_{T'-x^*}$ is KKM".
Since the map $\Gamma_{T'-x^*}$ has convex and finitely closed values in $E$,
the latter statement is equivalent by Theorem \ref{Th-KKM-fip} to the following:
"for any $x^*\in E^*$ and any finite subset $T'\subset T$,
the map $\Gamma_{T'-x^*}$ satisfies (FIP)", which
is clearly a restatement of (2).
\ps
(2) $\Rightarrow$ (3). Let $x^*\in E^*$ and let
$K$ be a nonempty compact convex subset of $E$.
For $(y,y^*)\in T\cap (K\times E^*)$, consider the sets
$$\Gamma(y,y^*):=\{ x\in K\tq \la y^*-x^*, y-x\ra\ge 0\}.$$
It follows from (2) that for every finite subset
$\{(x_i,x^*_i)\tq i\in [m]\}\subset T\cap (K\times E^*)$
there exists $\xb\in [x_1,\ldots,x_m]$ such that
$ \forall i\in [m],\ \la x^*_i - x^*, x_i - \xb\ra \ge 0$.
Since $K$ is convex and the $x_i$ are in $K$, we derive that $\xb$
lies in $K$. Hence, (2) implies that "the family $\{\Gamma(y,y^*) : (y,y^*)
\in T\cap (K\times E^*)\}$ has the finite intersection property".
But since the sets $\Gamma(y,y^*)$ are closed in the compact set $K$,
this is equivalent to saying that 
"the family $\{\Gamma(y,y^*) : (y,y^*)\in T\cap (K\times E^*)\}$ has a
nonempty intersection", which is a restatement of (3).
\ps
(3) $\Rightarrow$ (2). Let $x^*\in E^*$ and let
$\{(x_i,x^*_i)\tq i\in [m]\}\subset T$. Apply (3) with $K= [x_1,\ldots,x_m]$
to obtain (2).
\end{proof}

As in John \cite{Joh01} for the case of quasimonotonicity,
from Theorem \ref{converse-Minty} we derive a very simple
characterization of monotonicity:
\begin{corollary}
Let $E$ be a real locally convex topological vector space
with topological dual $E^*$.
Let $T:E\tom E^*$. The following are equivalent:
\ps
{\rm (1)} $T$ is monotone, that is:
for every $(x_1,x^*_1)\in T$ and $(x_2,x^*_2)\in T$,
\begin{equation*}
\la x^*_2 - x^*_1, x_2 - x_1\ra \ge 0;
\end{equation*}
\ps
{\rm (2)} For every $x^*\in X^*$ and every $(x_1,x^*_1)\in T$ and $(x_2,x^*_2)\in T$,
\begin{equation*}
\exists \xb\in [x_1,x_2]:
\la x^*_1 - x^*, x_1 - \xb\ra \ge 0 \text{ and } \la x^*_2 - x^*, x_2 - \xb\ra \ge 0.
\end{equation*}
\end{corollary}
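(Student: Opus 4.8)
The plan is to recognize this Corollary as the two-point specialisation of Theorem~\ref{converse-Minty} and to exploit the fact that monotonicity is a \emph{pairwise} condition. The key observation is that an operator $T$ is monotone if and only if each of its subsets consisting of at most two pairs is monotone, since the defining inequality $\la x^*_2-x^*_1,x_2-x_1\ra\ge 0$ compares only two elements of $T$ at a time (the diagonal case being the trivial $0\ge 0$).

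For the implication (1) $\Rightarrow$ (2), I would invoke the implication (1) $\Rightarrow$ (2) of Theorem~\ref{converse-Minty} and read it off for the particular finite subset $\{(x_1,x^*_1),(x_2,x^*_2)\}$, i.e.\ with $m=2$. The resulting point $\xb\in[x_1,x_2]$ satisfying both inequalities is exactly statement (2) of the Corollary, so nothing further is needed in this direction.

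For the converse (2) $\Rightarrow$ (1), I would fix an arbitrary pair $(x_1,x^*_1),(x_2,x^*_2)\in T$ and regard the two-element set $T':=\{(x_1,x^*_1),(x_2,x^*_2)\}$ as an operator $E\tom E^*$ in its own right. Statement (2) of the Corollary, read for this pair and for every $x^*\in E^*$, is precisely statement (2) of Theorem~\ref{converse-Minty} applied to $T'$ (the singleton subsystems of $T'$ being satisfied trivially by taking $\xb=x_i$). Theorem~\ref{converse-Minty} then yields that $T'$ is monotone, i.e.\ $\la x^*_2-x^*_1,x_2-x_1\ra\ge 0$. Since the pair was arbitrary, $T$ is monotone.

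The only point requiring care --- rather than a genuine obstacle --- is the bookkeeping that identifies condition (2) of the Corollary with condition (2) of Theorem~\ref{converse-Minty} once the latter is restricted to subsets of cardinality at most two, together with the remark that monotonicity of the whole operator is equivalent to monotonicity of all its two-element subsets. Once these identifications are made explicit, the Corollary follows immediately from Theorem~\ref{converse-Minty}.
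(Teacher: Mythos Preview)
Your proof is correct and follows essentially the same idea as the paper's: both exploit that monotonicity is a pairwise condition and then invoke Theorem~\ref{converse-Minty}. The paper phrases the reduction slightly differently---it notes that $T$ is monotone iff its restriction to every closed interval is monotone, and that compact convex subsets of a line are intervals---whereas you work directly with the two-element suboperator $T'=\{(x_1,x_1^*),(x_2,x_2^*)\}$; but the underlying mechanism (apply Theorem~\ref{converse-Minty} to two-point data and use that this already forces the monotonicity inequality) is the same.
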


\begin{proof}
Obviously, an operator $T:E\tom E^*$ is monotone if and only if
its restriction to any closed interval in $E$ is monotone.
Since nonempty compact convex subsets of closed intervals in $E$ are closed intervals
$[x_1,x_2]$, the result follows from Theorem \ref{converse-Minty}.
\end{proof}
\medbreak\noindent
\textit{Acknowledgement.}
This work was completed while the author was visiting the
Vietnam Institute for Advanced Study in Mathematics (VIASM).
He would like to thank the VIASM for financial support and hospitality.
{\small

}

\end{document}